\documentclass[10pt]{amsart}

\newtheorem{theorem}{Theorem}[section]
\newtheorem{lemma}[theorem]{Lemma}
\newtheorem{corollary}[theorem]{Corollary}

\theoremstyle{definition}
\newtheorem{definition}[theorem]{Definition}
\newtheorem{example}[theorem]{Example}
\newtheorem{convention}[theorem]{Convention}

\newtheorem{remark}[theorem]{Remark}
\newtheorem{mainfact}[theorem]{Main Fact}
\numberwithin{equation}{section}

\newcommand{\arxiv}[1]{\href{http://arxiv.org/abs/#1}{\texttt{arXiv:#1}}}

\usepackage[vcentermath, enableskew]{youngtab}
\usepackage{mathtools}
\usepackage[colorlinks=true, pdfstartview=FitV, linkcolor=blue, citecolor=blue, urlcolor=blue]{hyperref}
\usepackage{enumerate}
\usepackage{tikz}
\usepackage{wasysym}
\usepackage{amssymb}
\usepackage{multicol}
\usepackage{amsmath,amscd}


\title{Unified framework for tableau models of Grothendieck polynomials}

\author[G.~Hawkes]{Graham Hawkes}
\address[G. Hawkes]{Department of Mathematics, Ben Gurion University of the Negev, 1 David Ben Gurion Blvd., Be'er Sheva, Israel}
\email{ghawkes1217@gmail.com}

\keywords{}

\begin{document}

\begin{abstract}
We give combinatorial proofs of two types of duality for Grothendieck polynomials by constructing a unified combinatorial framework incorporating set-valued tableaux, musltiset-valued tableaux, reverse plane partitions and valued-set tableaux.   Importantly, our proofs extend to proofs of these dualities for the refined Grothendieck polynomials. The second of these dualities was formerly unknown for the refined case.
\end{abstract}

\maketitle


\section{Introduction}

 Lascoux and Sch\"utzenberger~\cite{LS82} introduced Grothendieck polynomials to represent the K-theory ring of the Grassmannian.  Fomin and Kirillov~\cite{FK94} later  initiated the study of stable Grothendieck functions. The subset of stable Grothendieck functions corresponding to Grassmannian permutations can be indexed by partitions and are called symmetric Grothendieck polynomials. They are Schur positive and form a linearly independent set in the space of symmetric functions of any degree. In particular, the lowest degree term of a symmetric Grothendieck polynomial is the Schur function indexed by the same partition.

We now remind the reader of two very important operations involving the ring of symmetric functions.  The first is the involution, $\omega$, that interchanges elementary and homogeneous symmetric functions.  The second is the Hall inner product, $\langle ,  \rangle$, which is the bilinear form defined by declaring the homogeneous and monomial symmetric bases of the ring of symmetric functions to be orthonormal.  Aside from being incredibly useful tools for proving essential properties of symmetric functions  these operations have an amazing relationship to Schur functions.  First, it can be shown that the involution $\omega$ sends the Schur function $s_{\lambda}$ to $s_{\lambda'}$, the Schur function indexed by the transpose of $\lambda$.  Second, the under the Hall inner product we can show that the Schur functions form an orthonormal basis for the ring of symmetric functions, meaning that $\langle s_{\lambda},s_{\mu}\rangle = \delta_{\lambda,\mu}$.

Since the symmetric Grothendieck polynomial is a generalization of the Schur function, it is natural to ask (1) how the involution $\omega$ affects symmetric Grothendieck polynomials, and (2) whether there is a set of polynomials orthonormal under the Hall inner product to the symmetric Grothendieck polynomials. These considerations led to the establishment of the four following versions of the symmetric Grothendieck polynomials and the study of their properties in relation to the ``big Hopf algebra of Multisymmetric functions" described in section 9 of \cite{LP07} as well as to the invention of the following combinatorial models to represent them.

\begin{enumerate}
\item[(1A)] Symmetric Grothendieck polynomials using set-valued tableaux \cite{Buch02},
\item [(1B)] Weak symmetric Grothendieck polynomials using multiset-valued tableaux \cite{LP07},
\item [(2A)] Dual symmetric Grothendieck polynomials using reverse plane partitions \cite{LP07},
\item [(2B)]Dual weak symmetric Grothendieck polynomials using valued-set tableaux \cite{LP07}.
\end{enumerate}

These polynomials have the following relationships: (1A) goes to (1B) under $\omega$ and (2A) goes to (2B) under $\omega$ while (1A) is dual under $\langle,\rangle$ to (2A) and (1B) is dual under $\langle,\rangle$ to (2B). In other words we have the following diagram:

\begin{center}
$\begin{CD}
\mathrm{1A} @>\omega>> \mathrm{1B}\\
@VV\langle,\rangle V @VV\langle,\rangle V\\
\mathrm{2A} @>\omega>> \mathrm{2B}
\end{CD}$
\end{center}

Crystal analyses of these objects such as the analysis of reverse plane partitions appearing in \cite{GAL17}, the analysis of set-valued tableaux appearing in \cite{MPS18}, and the analysis of multiset-valued tableaux and valued-set tableaux appearing in \cite{HS20}, point toward a meaningful refinement of these polynomials using an additional parameter (in this paper $\mathbf{z}$). In particular, if we analyze the crystal structures given in each one of these three papers, we find that the connected components of the crystals respect this refinement.  In other words, the crystal structures given not only prove the Schur positivity of the polynomials considered in the papers, but also, the Schur positivity of the corresponding refined versions.  Although this is not explicitly stated in these papers, it is not difficult to deduce this fact from the constructions given therein.  For additional background on the refined Grothendieck polynomials the reader is also suggested to see definition 3.2, Theorem 3.3, and remark 3.9 of \cite{CP19}. Each of these refinements appears naturally in the underlying combinatorial object and denoting the new refined versions as \textbf{1A}, \textbf{1B}, \textbf{2A}, and \textbf{2B} we still have:

\begin{center}
$\begin{CD}
\mathbf{1A} @>\omega>> \mathbf{1B}\\
@VV\langle,\rangle V @VV\langle,\rangle V\\
\mathbf{2A} @>\omega>> \mathbf{2B}
\end{CD}$
\end{center}

This paper provides a combinatorial explanation of all these arrows, which was previously an open question. In order to do this, we begin by realizing each of the four combinatorial models as a particular instance of a certain type of tableau. Then we use a standard RSK argument and a standard jeu de taquin argument to find out what the fundamental combinatorial facts we need to prove are. As it turns out, a single combinatorial fact (main fact \ref{mf1}) explains both horizontal arrows, and another fact (main fact \ref{mf2}) explains both vertical arrows.

We remark that it may be worthwhile to investigate how the combinatorial results of this paper could be used to understand the significance of the refinements of the four polynomials studied here to the big Hopf algebra of Multisymmetric functions of \cite{LP07}.

\subsection{Primed Tableaux}

\begin{definition}
Consider the alphabet $\{1'<1<2'<2<3'<3<\cdots\}$. An \emph{overfull tableau} of shape $\lambda$, or an element of $\mathrm{OT}(\lambda)$, is a filling of a Young diagram of shape $\lambda$ such that
\begin{itemize}
\item Each box is nonempty and contains a set from $\{1',2',\ldots\}$ and a multiset from $\{1,2,\ldots\}$.
\item Suppose there is an $a$ in box $A$ and a $b$ in box $B$. Suppose box $B$ lies immediately to the right of box $A$. Then $a<b$ or else $a=b$ and both are unprimed numbers.
\item Suppose there is an $a$ in box $A$ and a $c$ in box $C$. Suppose box $C$ lies immediately below box $A$. Then $a<c$ or else $a=c$ and both are primed numbers.
\end{itemize}

An \emph{underfull tableau} of shape $\lambda$, or an element of $\mathrm{UT}(\lambda)$, is a filling of a Young diagram of shape $\lambda$ such that
\begin{itemize}
\item Each box is either empty or contains one number from either the set $\{1',2',\ldots\}$ or the set $\{1,2,\ldots\}$. However, no box in the leftmost column is empty.
\item Suppose there is an $a$ in box $A$ and a $b$ in box $B$. Suppose box $B$ lies to the right of box $A$ in the same row and is the leftmost such box that is nonempty. Then $a<b$ or else $a=b$ and both are unprimed numbers.
\item Suppose there is an $a$ in box $A$ and a $c$ in box $C$. Suppose box $C$ lies in the row below box $A$ and weakly to its left, and, is the rightmost such box that is nonempty. Then $a<c$ or else $a=c$ and both are primed numbers.
\end{itemize}

\end{definition}
The \emph{ left weight} of an $\mathrm{OT}$ or a $\mathrm{UT}$ is the vector whose $i^{th}$ coordinate records the number of times $i$ appears in the tableau. The \emph{right weight} of an $\mathrm{OT}$ or a $\mathrm{UT}$ is the vector whose $i^{th}$ coordinate records the number of times $i'$ appears in the tableau. The \emph{overweight} of an $\mathrm{OT}$ of shape $\lambda$ is the vector whose $i^{th}$ coordinate records the difference between the number of entries in column $i$ and the number of boxes in column $i$. The \emph{underweight} of a $\mathrm{UT}$ of shape $\lambda$ is the vector whose $i^{th}$ coordinate records the difference between the number of boxes in column $(i+1)$ and the number of entries in column $(i+1)$ (equivalently, the number of empty boxes in column $(i+1)$). (By convention, either of these differences is taken to be $0$ if the specified column is not part of the tableau.) A tableau which is both an overfull tableau and an underfull tableau (i.e., has exactly one entry per box) is called a primed tableau and the set of all such tableaux of shape $\lambda$ is denoted by $\mathrm{PT}(\lambda)$.

\begin{definition}

Let $\lambda$ be a partition and let $\mathbf{x}=(x_1,x_2,\ldots)$, $\mathbf{y}=(y_1,y_2,\ldots)$, and $\mathbf{z}=(z_1,z_2,\ldots)$ be infinite sets of indeterminants. We define polynomials:
\begin{eqnarray*}
\mathfrak{G}_{\lambda}(\mathbf{x},\mathbf{y},\mathbf{z})&=&\sum_{T \in \mathrm{OT}(\lambda)} \mathbf{x}^{\ell w(T)} \mathbf{y}^{rw(T)}\mathbf{z}^{O(T)}(-1)^{|O(T)|}\\
\mathfrak{G}^*_{\lambda}(\mathbf{x},\mathbf{y},\mathbf{z})&=&\sum_{T \in \mathrm{UT}(\lambda)} \mathbf{x}^{\ell w(T)} \mathbf{y}^{rw(T)}\mathbf{z}^{U(T)}
\end{eqnarray*}
Here, $\ell w(T)$ is the left weight of $T$, $rw(T)$ is the right weight of $T$, $O(T)$ is the overweight of $T$, and $U(T)$ is the underweight of $T$.
\end{definition}

\begin{example} An overfull and an underfull tableau are shown below.

\begin{multicols}{2}
$P=\Yboxdim{24pt}
\young({{1'11}}{{12'}}{{23'}},{{2'}}{{2}}{{3'33}},{{2'3'}}{{3}})$\\
\begin{itemize}
\item $P \in \mathrm{OT}(3,3,2)$
\item $\ell w(P)=(3,2,3)$
\item $rw(P)=(1,3,3)$
\item $O(P)=(3,1,3)$
\end{itemize}
$Q=\Yboxdim{18pt} \young({{1'}}{{}}{{1}}{{}}{{1}},{{1}}{{2'}}{{}}{{2}}{{}},{{2'}}{{}}{{3}},{{3}})$\\
\begin{itemize}
\item $Q \in \mathrm{UT}(4,4,3,1)$
\item $\ell w(Q)=(3,1,2)$
\item $rw(Q)=(1,2,0)$
\item $U(Q)=(2,1,1,1)$
\end{itemize}
\end{multicols}

\end{example}

\begin{definition}
Let $\lambda$ be a partition with conjugate, $\lambda'$. For the partition $\lambda$ the
\begin{enumerate}
\item [\emph{(\textbf{1A})}] refined symmetric Grothendieck polynomial is $\mathfrak{G}_{\lambda'}(\mathbf{0},\mathbf{x},\mathbf{z})$.
\item [\emph{(\textbf{1B})}] refined weak symmetric Grothendieck polynomial is $\mathfrak{G}_{\lambda}(\mathbf{x},\mathbf{0},\mathbf{z})$.
\item [\emph{(\textbf{2A})}] refined dual symmetric Grothendieck polynomial is $\mathfrak{G}^*_{\lambda'}(\mathbf{0},\mathbf{x},\mathbf{z})$.
\item [\emph{(\textbf{2B})}] refined dual weak symmetric Grothendieck polynomial is $\mathfrak{G}^*_{\lambda}(\mathbf{x},\mathbf{0},\mathbf{z})$.
\end{enumerate}
The nonrefined versions of these polynomials are obtained by setting $\mathbf{z}=\mathbf{1}$.
\end{definition}

\begin{remark}

These definitions coincide with the combinatorial definitions of these polynomials given elsewhere.  We now explicitly demonstrate this for the nonrefined cases using definitions that appear elsewhere in the literature word for word (up to transposition of rows and columns for consistency in some cases). 

\begin{enumerate}
\item[(\textbf{1A})] The symmetric Grothendieck polynomial associated to $\lambda$ is defined in \cite{Buch02} as $\sum \mathbf{x}^{wt(T)}$ where the sum is over all \emph{``set-valued tableaux"} of shape $\lambda$, defined in section 3 of \cite{Buch02} as:

 \emph{``If a and b are two non-empty subsets of the positive integers $\mathbb{N}$, we will write
$a < b$ if $\max(a) < \min(b)$, and $a \leq b$ if $\max(a) \leq \min(b)$. We define a set-valued
tableau to be a labeling of the boxes in a Young diagram with
finite non-empty subsets of $\mathbb{N}$, such that the rows are weakly increasing from left
to right and the columns strictly increasing from top to bottom."} The weight, $wt(T)$, of such a tableau is the vector whose $i^{th}$ coordinate records the number of times $i$ appears in the tableau.

On the other hand, by our definitions, the symmetric Grothendieck polynomial associated to $\lambda$ is $\mathfrak{G}_{\lambda'}(\mathbf{0},\mathbf{x},\mathbf{1})$, which is the generating function (weighted by right weight) over overfull tableaux of shape $\lambda'$ containing only entries from $\{1',2',\ldots\}$.  Transposing the diagram of such a tableau and removing the primes gives a ``\emph{set-valued tableau}" of shape $\lambda$ as defined above.  Moreover, this procedure sends right weight to weight and so it follows that our definition of this polynomial agrees with the cited definition.

\item[(\textbf{1B})] The weak symmetric Grothendieck polynomial associated to $\lambda$ is defined in \cite{LP07} as $\sum  \mathbf{x}^{wt(T)}$ where the sum is over all \emph{``weak set-valued tableaux"} of shape $\lambda$, defined in section 9.7 of \cite{LP07} as: 

\emph{``A weak set-valued tableau T of shape $\lambda$ is a filling of the boxes
with finite nonempty multisets of positive integers (thus, numbers in one box are not
necessarily distinct) so that
\begin{enumerate}
\item the smallest number in each box is strictly bigger than the largest number
in the box directly [above] it (if that box is present);
\item the smallest number in each box is greater than or equal to the largest
number in the box directly [to the left of] it (if that box is present)."
\end{enumerate}} \noindent The weight, $wt(T)$, of such a tableau is the vector whose $i^{th}$ coordinate records the number of times $i$ appears in the tableau.

On the other hand, by our definitions, the  weak symmetric Grothen-dieck polynomial associated to $\lambda$ is $\mathfrak{G}_{\lambda}(\mathbf{x},\mathbf{0},\mathbf{1})$, which is the generating function (weighted by left weight) over overfull tableaux of shape $\lambda$ containing only entries from $\{1,2,\ldots\}$.  But such tableaux are precisely the ``\emph{weak set-valued tableaux}" described above and their left weight is the weight of the tableau considered as a \emph{weak set-valued tableau}.  It follows that our definition of this polynomial agrees with the cited definition.

\item[(\textbf{2A})] The dual symmetric Grothendieck polynomial associated to $\lambda$ is defined in \cite{LP07} as $\sum  \mathbf{x}^{wt(T)}$ where the sum is over all \emph{``reverse plane partitions"} of shape $\lambda$, defined in section 9.1 of \cite{LP07} as: 

\emph{``A reverse plane partition $T$ of shape $\lambda$ is a filling of the boxes in $\lambda$ with positive integers so
that the numbers are weakly increasing in rows and columns."}  The weight, $wt(T)$, of such a tableau is the vector whose $i^{th}$ coordinate records the number of columns that contain an $i$.

On the other hand, by our definitions, the  dual  symmetric Grothen-dieck polynomial associated to $\lambda$ is $\mathfrak{G}^*_{\lambda'}(\mathbf{0},\mathbf{x},\mathbf{1})$, which is the generating function (weighted by right weight) over underfull tableaux of shape $\lambda'$ that only contain entries from $\{1',2',\ldots\}$.  To obtain a reverse plane partition of shape $\lambda$ from such a tableau apply the following procedure: First remove all the primes.  Then for each empty box find the closest nonempty box to its left.  Copy the entry in this box into the empty box. Transpose the result.  Since this procedure sends right weight to weight it follows that our definition of this polynomial agrees with the cited definition.

\item[(\textbf{2B})] The dual weak symmetric Grothendieck polynomial associated to $\lambda$ is defined in \cite{LP07} as $\sum \mathbf{x}^{wt(T)}$ where the sum is over all \emph{``valued-set tableaux"} of shape $\lambda$, defined in section 9.8 of \cite{LP07} as:

 \emph{``A valued-set tableaux $T$ of shape $\lambda$ is a filling of the boxes of $\lambda$ with positive integers so that
\begin{enumerate}
\item $T$ is a [usual] semistandard tableau, and
\item we are provided with the additional information of a decomposition of the
shape into a disjoint union $\lambda = \cup A_j$ of groups $A_j$ of boxes so that each
$A_j$ is connected and completely contained within a single [row] and all
boxes in each $A_j$ contain the same number."
\end{enumerate}} \noindent The weight, $wt(T)$, of such a tableau is the vector whose $i^{th}$ coordinate records the number of $A_j$ in the tableau that contain $i(s)$.

On the other hand, by our definitions, the dual weak  symmetric Grothendieck polynomial associated to $\lambda$ is $\mathfrak{G}^*_{\lambda}(\mathbf{x},\mathbf{0},\mathbf{1})$, which is the generating function (weighted by left weight) over underfull tableaux of shape $\lambda$ that only contain entries from $\{1,2,\ldots\}$.  To obtain a valued-set tableau of shape $\lambda$ from such a tableau apply the following procedure: For each nonempty box in the tableau create a group of boxes $A_j$ composed of that box along with all the empty boxes to its right (but to the left of the next nonempty box to the right).  Fill all of the boxes in each $A_j$ with the same number as appears in its leftmost box.  Since this procedure sends left weight to weight it follows that our definition of this polynomial agrees with the cited definition.  

\end{enumerate}

There are additional comparisons between our refined polynomials and those defined elsewhere in the literature to be made involving the underweight and overweight.  We briefly mention one example of this: observe the definition of ``\emph{excess}" given in relation to the definition of the refined symmetric Grothendieck polynomial in section 3 of \cite{CP19}: \emph{``Given a set-valued tableau $T$ of shape $\sigma$, define the excess of $T$, denoted $e(T)$, as the vector
$e = (e_1, e_2, \ldots)$ in which $e_i$ records the number of labels in [column] $i$ in excess of the number of boxes in [column] $i$ of $T$."} This is equivalent to our definition of overweight.

\end{remark}

\begin{definition}

Let $\mu \subseteq \lambda$ be partitions with an equal number of rows. An \emph{over flagged tableau} of shape $\lambda/\mu$, or an element of $\mathrm{OFT}(\lambda/\mu)$, is a filling of a Young diagram of shape $\lambda/\mu$ using the alphabet $1<2<\cdots$ such that:
\begin{itemize}
\item Each box in row $i$ of $\lambda/\mu$ contains one element from $\{1,2,\ldots,\mu_i\}$.
\item Suppose box $A$ lies immediately to the left of box $B$. Suppose there is an $a$ in $A$ and a $b$ in $B$. Then $a\geq b$.
\item Suppose box $A$ lies immediately to the above of box $C$. Suppose there is an $a$ in $A$ and a $c$ in $C$. Then $a>c$.
\end{itemize}
The weight, $wt(P)$, of an $\mathrm{OFT}$, $P$, is the vector whose $i^{th}$ coordinate records the number of times $i$ appears in the tableau.

Let $\mu \subseteq \lambda$ be partitions with an equal number of rows. An \emph{under flagged tableau} of shape $\lambda/\mu$, or an element of $\mathrm{UFT}(\lambda/\mu)$ is a filling of a Young diagram of shape $\lambda/\mu$ using the alphabet $1' <2'<\cdots$ such that:
\begin{itemize}
\item Each box in row $i$ of $\lambda/\mu$ contains one element from $\{1',\ldots,(\lambda_i-1)'\}$.
\item Suppose box $A$ lies immediately to the left of box $B$. Suppose there is an $a$ in $A$ and a $b$ in $B$. Then $a< b$.
\item Suppose box $A$ lies immediately to the above of box $C$. Suppose there is an $a$ in $A$ and a $c$ in $C$. Then $a\leq c$.
\end{itemize}
The weight, $wt(P)$, of a $\mathrm{UFT}$, $P$ is the vector whose $i^{th}$ coordinate records the number of times $i'$ appears in the tableau.
\end{definition}

\begin{example} An over flagged and an under flagged tableau are shown below.
\begin{multicols}{2}
$P=\Yboxdim{18pt}
\young(****42,***321,**221,*111)$\\
\begin{itemize}
\setlength{\itemindent}{-1em}
\item \small{$P \in \mathrm{OFT}((6,6,5,4)/(4,3,2,1))$}
\item $wt(P)=(5,4,1,1)$
\end{itemize}
$Q=\Yboxdim{18pt}
\young(****{{1'}}{{5'}},***{{2'}}{{3'}}{{5'}},**{{1'}}{{2'}}{{4'}},*{{1'}}{{2'}}{{3'}})$\\
\begin{itemize}
\setlength{\itemindent}{-2em}
\item $Q \in \mathrm{UFT}((6,6,5,4)/(4,3,2,1))$
\item $wt(Q)=(3,3,2,1,2)$
\end{itemize}
\end{multicols}

\end{example}
\begin{convention}\label{difrows} If $\lambda \supseteq \mu$ are partitions with different numbers of rows, then we set $\mathrm{OFT}(\lambda/\mu)=\emptyset=\mathrm{UFT}(\lambda/\mu)$.
\end{convention}

\begin{lemma}\label{rskjdt} There are bijections between the following sets:
\begin{enumerate}
\item $\mathrm{OT}(\mu)$ and $\{(P,Q): \exists \lambda \supseteq \mu: P \in \mathrm{PT}(\lambda),Q \in \mathrm{OFT}(\lambda/\mu)\}$.
\item $\mathrm{UT}(\lambda)$ and $\{(P,Q): \exists \mu \subseteq \lambda: P \in \mathrm{PT}(\mu),Q \in \mathrm{UFT}(\lambda/\mu)\}$.
\end{enumerate}
In case (1), if $T \rightarrow (P,Q)$ then $\ell w(T)=\ell w(P)$, $rw(T)=rw(P)$, and $O(T)=wt(Q)$.\\
In case (2), if $T \rightarrow (P,Q)$ then $\ell w(T)=\ell w(P)$, $rw(T)=rw(P)$, and $U(T)=wt(Q)$.
\end{lemma}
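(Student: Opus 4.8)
The plan is to prove both bijections by the same mechanism: a column-insertion (RSK-type) algorithm for part (1) and a jeu-de-taquin-type rectification for part (2), keeping careful track of the three statistics. I will treat the two cases in parallel because an overfull tableau is, informally, a primed tableau with ``extra'' entries crammed into boxes, while an underfull tableau is a primed tableau with boxes deleted; in both situations the auxiliary data needed to reconstruct the original object is exactly a flagged tableau recording where the surplus (respectively deficit) occurred, column by column.

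For case (1), given $T\in OT(\mu)$, I would process the extra entries of $T$ one at a time. Reading the extra entries out of $T$ in a fixed order (say, scanning columns right-to-left, and within a column in a prescribed order adapted to the primed alphabet), I insert each extra entry into the current primed tableau via a Schensted-style column insertion that respects the ordering rules of $PT(\lambda)$ (strict increase down columns except for primed equalities, weak increase along rows with the unprimed-equality exception). Each insertion adds exactly one box to the shape, and I record in the recording tableau $Q$ the column in which the new box was created; the flag condition ``row $i$ entry lies in $\{1,\dots,\mu_i\}$'' should fall out because a box added in row $i$ of $\lambda/\mu$ can only be created by bumping that originated from column at most $\mu_i$ of the original shape, and the semistandardness of $Q$ (weakly decreasing along rows, strictly decreasing down columns) encodes that successive insertions creating boxes in the same row come from weakly smaller columns, while boxes stacked vertically come from strictly decreasing columns. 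The statistic bookkeeping is then immediate: the primed and unprimed entries of $T$ are precisely those surviving in $P$, giving $\ell w(T)=\ell w(P)$ and $rw(T)=rw(P)$; and the number of extra entries originally in column $i$ equals the number of times column $i$ is recorded in $Q$, i.e. $O(T)=wt(Q)$. Invertibility comes from running reverse column insertion driven by $Q$ read in the reverse order.

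For case (2), given $T\in UT(\lambda)$, the empty boxes play the role of the surplus entries above. Here I would rectify $T$ by a jeu-de-taquin-style procedure that slides entries leftward/upward past the empty boxes (using the adjacency rules in the $UT$ definition, which are exactly designed so that ``nearest nonempty box to the right'' and ``nearest nonempty box below-and-weakly-left'' behave like genuine neighbors), collapsing $T$ down to a primed tableau $P$ on a subshape $\mu\subseteq\lambda$ whose boxes are the nonempty ones after rectification; the trajectory of each empty box is recorded in $Q$, with the column it finally exits determining its entry. The flag bound ``row $i$ entry in $\{1',\dots,(\lambda_i-1)'\}$'' reflects that an empty box starting in row $i$ can travel out through at most column $\lambda_i-1$, and the $UFT$ inequalities (strict increase along rows, weak increase down columns) record the relative order in which empty holes are evacuated, analogously to reverse-jeu-de-taquin recording tableaux. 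Again the left and right weights are untouched since rectification only moves existing entries, and $U(T)=wt(Q)$ because the $i$th coordinate of $U(T)$ counts boxes of column $i+1$ with no entry, which is exactly the number of holes recorded as exiting in the appropriate column.

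The main obstacle I anticipate is not the existence of the maps but the verification that the outputs land in the claimed sets and that the maps are mutually inverse: specifically, checking that a single column-insertion step preserves membership in $PT(\lambda)$ under the delicate primed/unprimed tie-breaking rules, and that the resulting recording tableau genuinely satisfies the $OFT$ (resp. $UFT$) inequalities with the correct flag bounds — these inequalities are nonstandard (mixing $\geq$ with $>$, or using the ``weakly to the left'' adjacency), so the usual Schensted/jeu-de-taquin lemmas must be re-proved in this setting. I would isolate this as a single-step lemma (one insertion, one slide) and then obtain the full bijection by induction on the number of extra entries (resp. empty boxes), with the statistic identities tracked step by step.
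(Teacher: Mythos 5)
Your overall strategy---RSK-type column insertion for overfull tableaux, jeu de taquin for underfull tableaux, with a recording tableau tracking the surplus or deficit column by column---is exactly the paper's, and your observation that the real work is the single-step verification under the primed/unprimed tie-breaking rules is also what the paper leans on. There is, however, a genuine error in the bookkeeping for case~(2). You propose to record in $Q$ ``the column [the hole] finally exits'' and then assert $U(T)=wt(Q)$ because the $i^{\text{th}}$ coordinate of $U(T)$ counts the empty boxes of column $i+1$. But $U(T)$ counts empty boxes by their \emph{original} column in $T$, and a jeu de taquin slide in general moves the hole rightward as well as downward: for instance, if $T$ has shape $(3,2)$ with an empty box at $(1,2)$ and a $2'$ at $(1,3)$ (and $1',1,2$ elsewhere), the hole slides right and is evacuated at $(1,3)$, yet $U(T)$ records it under column $2$. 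Since the multiset of exit columns need not equal the multiset of starting columns, your recording rule breaks $U(T)=wt(Q)$. The paper's recipe instead tags the evacuated outer-corner box with $(i-1)'$, where $i$ is the column the hole \emph{started} in; that makes $U(T)=wt(Q)$ hold by construction, and the flag bound $(i-1)'\le(\lambda_r-1)'$ at an exit in row $r$ still holds because the starting column is weakly to the left of the exit column. Your case~(1) contains a parallel slip in the sentence ``I record in the recording tableau $Q$ the column in which the new box was created,'' but since your subsequent flag-condition and weight arguments clearly use the source column from which the extra entry was \emph{removed} (which is correct and matches the paper), I read that one as a wording lapse rather than a substantive gap.
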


We will be using certain versions of two well known combinatorial algorithms in the following proof, both adapted to the case of primed tableaux and assuming the order $1'<1<2'<2< \cdots$.  The column RSK insertion algorithm for primed tableaux is defined as follows:  Given $T \in \mathrm{PT}(\lambda)$ and $a \in \{1',1,2',2,\ldots\}$ we insert $a$ into $T$ by first inserting it into the leftmost column of $T$, where $a$ replaces the smallest entry greater than $a$ if $a$ is primed and the smallest entry greater than or equal to $a$ if $a$ is unprimed.  If no such entry exists, $a$ is appended to the bottom of the column.  The replaced entry is then inserted in the next column to the right and the procedure continues until an entry is appended to some column.  

The jeu de taquin algorithm for primed tableaux is defined as:  Given $T \in \mathrm{PT}(\lambda)$ remove the entry from the top left box, $b$, of $T$ (in the proof below the box will have already been removed before we start).  From the box below $b$ and the box to the right of $b$ select the smaller entry (if the entries are equal, select from the box below if the entries are unprimed, and, select from the box to the right  if the entries are primed).  Move this entry into $b$.  A new box is now empty.  Repeat the procedure with the new empty box taking the role of $b$ and continue like this until the empty box is an outer corner box of $\lambda$.

\begin{proof}
We construct each bijection. We will use RSK column insertion in the first and jeu de taquin in the second. 
\begin{enumerate}
\item Start with $T \in \mathrm{OT}(\mu)$ and construct $(P,Q)$ as follows. Beginning with the rightmost column, that is column $\mu_1$, and working to the leftmost column, that is, column $1$, do as follows. From each box in the current column, say column $i$, remove all but the smallest entry. Now, in weakly decreasing order, insert the removed entries to the tableau formed by the columns $i+1,i+2,\ldots$ using RSK column insertion. Let $P$ be the resulting tableau and suppose it has shape $\lambda$. Now construct a tableau $Q$ of shape $\lambda/\mu$ by placing an $i$ in each box that corresponds to the position of a box appended during the RSK insertions that occurred after removing entries from column $i$.
\item Start with $T \in \mathrm{UT}(\lambda)$ and construct $(P,Q)$ as follows. Beginning with the rightmost column, that is column $\lambda_1$, and working to the second to leftmost column, that is, column $2$, do as follows. Starting with the lowest and working to the highest, do the following for each empty box, $b$, in column $i$. Consider the tableau, $R$, whose upper left corner box is $b$ (that is, the tableau composed of all boxes lying weakly below and weakly to the right of $b$). $R$ has exactly one empty box which is box $b$. Apply jdt into box $b$. This results  in an empty corner box appearing in $R$.  Remove this box.  After this procedure has been done for each box $b$ for each column $i$ from $\lambda_1$ to $2$ define the result to be $P$ and denote its shape by $\mu$. Now construct a tableau $Q$ of shape $\lambda/\mu$ by placing a $(i-1)'$ in each box that corresponds to the position of a box removed after jdt into a box in column $i$.
\end{enumerate}
The $P$-tableaux constructed are valid primed tableaux by the properties of RSK and jdt. The $Q$-tableaux constructed are valid column strict and row strict tableaux respectively, also by properties of RSK and jdt. On the other hand, the flag conditions for an $\mathrm{OT}$ or a $\mathrm{UT}$ are ensured directly from the construction. Moreover, the weight conditions $\ell w(T)=\ell w(P)$, $rw(T)=rw(P)$ are immediate since RSK and jdt don't affect the set of entries appearing in a tableau, and the weight conditions $O(T)=wt(Q)$ and $U(T)=wt(Q)$ follow by construction. Finally, it is not difficult to see how to invert the maps using the fact that RSK and jdt are themselves invertible.
\end{proof}

\begin{example}
An example of bijection $(1)$ of Lemma \ref{rskjdt}. In each step the newly colored entry is to be RSK inserted into the column to its right. The final colored tableau is $P$ and $Q$ is shown directly below it.
\begin{eqnarray*}
\Yboxdim{24.1pt}
\young({{1'}}{{2'3'}},{{12'3'}}{{3\textcolor{red}{3}}},{{3'4'4}}) \rightarrow
\young({{1'}}{{2'\textcolor{blue}{3'}}}{{\textcolor{red}{3}}},{{12'3'}}{{3}},{{3'4'4}}) \rightarrow
\young({{1'}}{{2'}}{{\textcolor{blue}{3'}}}{{\textcolor{red}{3}}},{{12'3'}}{{3}},{{3'4'\textcolor{green}{4}}}) \rightarrow
\young({{1'}}{{2'}}{{\textcolor{blue}{3'}}}{{\textcolor{red}{3}}},{{12'3'}}{{3}},{{3'\textcolor{brown}{4'}}}{{\textcolor{green}{4}}})\\
\Yboxdim{24.1pt}
\rightarrow \young({{1'}}{{2'}}{{\textcolor{blue}{3'}}}{{\textcolor{red}{3}}},{{12'\textcolor{orange}{3'}}}{{3}}{{\textcolor{green}{4}}},{{3'}}{{\textcolor{brown}{4'}}})
\rightarrow \young({{1'}}{{2'}}{{\textcolor{blue}{3'}}}{{\textcolor{red}{3}}},{{1\textcolor{gray}{2'}}}{{\textcolor{orange}{3'}}}{{3}}{{\textcolor{green}{4}}},{{3'}}{{\textcolor{brown}{4'}}})
\rightarrow \young({{1'}}{{2'}}{{\textcolor{blue}{3'}}}{{3}}{{\textcolor{red}{3}}},{{1}}{{\textcolor{gray}{2'}}}{{\textcolor{orange}{3'}}}{{\textcolor{green}{4}}},{{3'}}{{\textcolor{brown}{4'}}})\\
\Yboxdim{24.1pt} \young(**221,**11,*1)
\end{eqnarray*}
\end{example}

\begin{example}
An example of bijection $(2)$ of Lemma \ref{rskjdt}. In each step the jeu de taquin algorithm is to be applied into the box marked by the $\textcolor{red}{\times}$. The final tableau is $P$ and $Q$ is shown directly below it.

\begin{eqnarray*}
\Yboxdim{12pt}
\young({{1'}}{{}}{{}}{{}}{{3'}},{{2'}}{{}}{{3'}}{{}}{{\textcolor{red}{\times}}},{{2}}{{3}}{{}}{{4}}{{4}},{{4}}{{}}{{4}})\rightarrow
\young({{1'}}{{}}{{}}{{}}{{3'}},{{2'}}{{}}{{3'}}{{\textcolor{red}{\times}}}{{4}},{{2}}{{3}}{{}}{{4}},{{4}}{{}}{{4}})\rightarrow
\young({{1'}}{{}}{{}}{{\textcolor{red}{\times}}}{{3'}},{{2'}}{{}}{{3'}}{{4}}{{4}},{{2}}{{3}}{{}},{{4}}{{}}{{4}})\rightarrow
\young({{1'}}{{}}{{}}{{3'}}{{4}},{{2'}}{{}}{{3'}}{{4}},{{2}}{{3}}{{\textcolor{red}{\times}}},{{4}}{{}}{{4}})\rightarrow\\
\Yboxdim{12pt}
\young({{1'}}{{}}{{\textcolor{red}{\times}}}{{3'}}{{4}},{{2'}}{{}}{{3'}}{{4}},{{2}}{{3}}{{4}},{{4}}{{}})\rightarrow
\young({{1'}}{{}}{{3'}}{{4}}{{4}},{{2'}}{{}}{{3'}},{{2}}{{3}}{{4}},{{4}}{{\textcolor{red}{\times}}})\rightarrow
\young({{1'}}{{}}{{3'}}{{4}}{{4}},{{2'}}{{\textcolor{red}{\times}}}{{3'}},{{2}}{{3}}{{4}},{{4}})\rightarrow
\young({{1'}}{{\textcolor{red}{\times}}}{{3'}}{{4}}{{4}},{{2'}}{{3'}}{{4}},{{2}}{{3}},{{4}}) \rightarrow
\young({{1'}}{{3'}}{{4}}{{4}}{{4}},{{2'}}{{3'}},{{2}}{{3}},{{4}})\\
\Yboxdim{12pt}
\young(*****,**{{1'}}{{2'}}{{3'}},**{{1'}}{{3'}}{{4'}},*{{1'}}{{2'}})
\end{eqnarray*}
\end{example}

The following two main facts are the main results of this paper. These facts along with the results so far stated give a complete combinatorial understanding of two types of duality appearing in Grothendieck polynomials. These facts will be proven in the next section.

\begin{mainfact}\label{mf1}
Let $\omega_{\mathbf{x}}$ denote the involution on functions symmetric in $\mathbf{x}$ over the ring $\mathbb{Z}[\mathbf{y},\mathbf{z}]$ defined by $\omega_{\mathbf{x}}(s_{\lambda}(\mathbf{x}))=s_{\lambda'}(\mathbf{x})$. Similarly, let $\omega_{\mathbf{y}}$ denote the involution on functions symmetric in $\mathbf{y}$ over the ring $\mathbb{Z}[\mathbf{x},\mathbf{z}]$ defined by $\omega_{\mathbf{y}}(s_{\lambda}(\mathbf{y}))=s_{\lambda'}(\mathbf{y})$. Then we have that:
\begin{eqnarray*}
\omega_{\mathbf{x}}\omega_{\mathbf{y}}\left(\sum_{P \in \mathrm{PT}(\lambda)} \mathbf{x}^{\ell w(P)} \mathbf{y}^{rw(P)}\right)=\sum_{P \in \mathrm{PT}(\lambda)} \mathbf{y}^{\ell w(P)} \mathbf{x}^{rw(P)}
\end{eqnarray*}
\end{mainfact}

\begin{corollary}\label{duo1} We have:
\begin{align*}
\omega_{\mathbf{x}}\omega_{\mathbf{y}}\mathfrak{G}_{\mu}(\mathbf{x},\mathbf{y},\mathbf{z})=\mathfrak{G}_{\mu}(\mathbf{y},\mathbf{x},\mathbf{z})\\
\omega_{\mathbf{x}}\omega_{\mathbf{y}}\mathfrak{G}^*_{\lambda}(\mathbf{x},\mathbf{y},\mathbf{z})=\mathfrak{G}^*_{\lambda}(\mathbf{y},\mathbf{x},\mathbf{z})\\
\end{align*}
\end{corollary}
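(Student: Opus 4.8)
The plan is to deduce Corollary~\ref{duo1} directly from Main Fact~\ref{mf1} and the RSK/jeu de taquin bijections of Lemma~\ref{rskjdt}. The key observation is that those bijections \emph{decouple} the $(\mathbf{x},\mathbf{y})$-statistics, which are carried entirely by the $PT$-factor $P$, from the $\mathbf{z}$-statistic, which is carried entirely by the flagged factor $Q$. Once the generating functions are written in this decoupled form, $\omega_{\mathbf{x}}\omega_{\mathbf{y}}$ acts only on the $PT$-sums, where Main Fact~\ref{mf1} applies verbatim, while the $Q$-sums are inert polynomials in $\mathbf{z}$.

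First I would rewrite $\mathfrak{G}_{\mu}$ using part (1) of Lemma~\ref{rskjdt}. Since $T\mapsto(P,Q)$ is a bijection from $OT(\mu)$ onto the set of pairs with $P\in PT(\lambda)$, $Q\in OFT(\lambda/\mu)$ for some $\lambda\supseteq\mu$, with $\ell w(T)=\ell w(P)$, $rw(T)=rw(P)$ and $O(T)=wt(Q)$, this gives
\begin{align*}
\mathfrak{G}_{\mu}(\mathbf{x},\mathbf{y},\mathbf{z})
&=\sum_{\lambda\supseteq\mu}\ \sum_{\substack{P\in PT(\lambda)\\ Q\in OFT(\lambda/\mu)}}\mathbf{x}^{\ell w(P)}\,\mathbf{y}^{rw(P)}\,\mathbf{z}^{wt(Q)}(-1)^{|wt(Q)|}\\
&=\sum_{\lambda\supseteq\mu}\Bigl(\sum_{P\in PT(\lambda)}\mathbf{x}^{\ell w(P)}\,\mathbf{y}^{rw(P)}\Bigr)\Bigl(\sum_{Q\in OFT(\lambda/\mu)}(-1)^{|wt(Q)|}\,\mathbf{z}^{wt(Q)}\Bigr),
\end{align*}
the factorization being legitimate because for each fixed $\lambda$ the pair $(P,Q)$ runs over the full Cartesian product $PT(\lambda)\times OFT(\lambda/\mu)$. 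This also exhibits $\mathfrak{G}_{\mu}$ as a $\mathbb{Z}[\mathbf{z}]$-linear combination of the functions $\sum_{P\in PT(\lambda)}\mathbf{x}^{\ell w(P)}\mathbf{y}^{rw(P)}$, each of which is homogeneous and symmetric separately in $\mathbf{x}$ and in $\mathbf{y}$ — precisely the setting in which Main Fact~\ref{mf1} is stated — so $\omega_{\mathbf{x}}\omega_{\mathbf{y}}$ is defined on $\mathfrak{G}_{\mu}$.

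Next I would apply $\omega_{\mathbf{x}}\omega_{\mathbf{y}}$. Because $\omega_{\mathbf{x}}$ is $\mathbb{Z}[\mathbf{y},\mathbf{z}]$-linear and $\omega_{\mathbf{y}}$ is $\mathbb{Z}[\mathbf{x},\mathbf{z}]$-linear, and because the $Q$-sums are polynomials in $\mathbf{z}$ alone, I can pass $\omega_{\mathbf{x}}\omega_{\mathbf{y}}$ through the outer sum and the $Q$-factors and invoke Main Fact~\ref{mf1} on each $PT(\lambda)$-sum, obtaining
\begin{align*}
\omega_{\mathbf{x}}\omega_{\mathbf{y}}\,\mathfrak{G}_{\mu}(\mathbf{x},\mathbf{y},\mathbf{z})
=\sum_{\lambda\supseteq\mu}\Bigl(\sum_{P\in PT(\lambda)}\mathbf{y}^{\ell w(P)}\,\mathbf{x}^{rw(P)}\Bigr)\Bigl(\sum_{Q\in OFT(\lambda/\mu)}(-1)^{|wt(Q)|}\,\mathbf{z}^{wt(Q)}\Bigr).
\end{align*}
Comparing this with the previous display after the substitution $\mathbf{x}\leftrightarrow\mathbf{y}$ identifies the right-hand side as $\mathfrak{G}_{\mu}(\mathbf{y},\mathbf{x},\mathbf{z})$, which is the first identity. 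The second identity is proved in exactly the same way, using part (2) of Lemma~\ref{rskjdt} in place of part (1): there $UT(\lambda)$ is in bijection with pairs $P\in PT(\mu)$, $Q\in UFT(\lambda/\mu)$ for $\mu\subseteq\lambda$, with $U(T)=wt(Q)$ and no sign, so
\begin{align*}
\mathfrak{G}^*_{\lambda}(\mathbf{x},\mathbf{y},\mathbf{z})=\sum_{\mu\subseteq\lambda}\Bigl(\sum_{P\in PT(\mu)}\mathbf{x}^{\ell w(P)}\,\mathbf{y}^{rw(P)}\Bigr)\Bigl(\sum_{Q\in UFT(\lambda/\mu)}\mathbf{z}^{wt(Q)}\Bigr),
\end{align*}
and the same termwise application of Main Fact~\ref{mf1} gives $\omega_{\mathbf{x}}\omega_{\mathbf{y}}\,\mathfrak{G}^*_{\lambda}(\mathbf{x},\mathbf{y},\mathbf{z})=\mathfrak{G}^*_{\lambda}(\mathbf{y},\mathbf{x},\mathbf{z})$.

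I expect essentially no obstacle: all of the genuine combinatorial content has already been isolated into Main Fact~\ref{mf1} and Lemma~\ref{rskjdt}, and what remains is bookkeeping — the decoupling of the generating function and the observation that $\omega_{\mathbf{x}}\omega_{\mathbf{y}}$ treats the $\mathbf{z}$-part as scalars. The only point deserving a word of care is the legitimacy of applying $\omega_{\mathbf{x}}$ and $\omega_{\mathbf{y}}$ term by term across the (graded, hence locally finite) sum over $\lambda$, and the fact that each $\sum_{P\in PT(\lambda)}\mathbf{x}^{\ell w(P)}\mathbf{y}^{rw(P)}$ really is an element of the symmetric-function ring in $\mathbf{x}$ over $\mathbb{Z}[\mathbf{y},\mathbf{z}]$ (and symmetrically for $\mathbf{y}$); but this is exactly the hypothesis under which Main Fact~\ref{mf1} is formulated, so no additional input is needed.
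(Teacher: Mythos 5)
Your proof is correct and follows essentially the same route as the paper's: rewrite $\mathfrak{G}_{\mu}$ and $\mathfrak{G}^*_{\lambda}$ via Lemma~\ref{rskjdt} as $\mathbb{Z}[\mathbf{z}]$-linear combinations of the sums $\sum_{P\in PT(\cdot)}\mathbf{x}^{\ell w(P)}\mathbf{y}^{rw(P)}$, then apply Main Fact~\ref{mf1} and linearity of $\omega_{\mathbf{x}}\omega_{\mathbf{y}}$. Your version is in fact slightly more careful than the paper's text, which has a small typo (writing $PT(\lambda)$ where $PT(\mu)$ is meant in the second displayed formula); you correctly use $PT(\mu)$.
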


\begin{proof}
Using Lemma \ref{rskjdt} we may write:
\begin{align}
&\mathfrak{G}_{\mu}(\mathbf{x},\mathbf{y},\mathbf{z})=\sum_{\lambda \supseteq \mu}& (-1)^{|\lambda|-|\mu|}&\sum_{Q \in \mathrm{OFT}(\lambda/\mu)}\mathbf{z}^{O(Q)}\sum_{P \in \mathrm{PT}(\lambda)} \mathbf{x}^{\ell w(P)} \mathbf{y}^{rw(P)}\label{pf1}&\\
&\mathfrak{G}^*_{\lambda}(\mathbf{x},\mathbf{y},\mathbf{z})=\sum_{\mu \subseteq \lambda} & & \sum_{Q \in \mathrm{UFT}(\lambda/\mu)}\mathbf{z}^{U(Q)}\sum_{P \in \mathrm{PT}(\lambda)} \mathbf{x}^{\ell w(P)} \mathbf{y}^{rw(P)}& \label{pf2}
\end{align}
The corollary now follows from main fact \ref{mf1} and the linearity of $\omega_{\mathbf{x}}$ and $\omega_{\mathbf{y}}$.
\end{proof}

\begin{remark}
Setting $\mathbf{y}=\mathbf{0}$ in the first equation of corollary \ref{duo1} we see that the refined weak symmetric Grothendieck polynomial maps under $\omega_{\mathbf{x}}$ to the refined symmetric Grothendieck polynomial of conjugate shape. Setting $\mathbf{y}=\mathbf{0}$ in the second equation of corollary \ref{duo1} we see that the refined dual weak symmetric Grothendieck polynomial maps under $\omega_{\mathbf{x}}$ to the refined dual symmetric Grothendieck polynomial of conjugate shape. In other words this shows $\textbf{1A} \xrightarrow{\omega} \textbf{1B}$ and $\textbf{2A} \xrightarrow{\omega} \textbf{2B}$.
\end{remark}

\begin{mainfact}\label{mf2}
Let $\mu \subset \lambda$ be partitions with the same number of rows. Then:
\begin{eqnarray*}
\sum_{\mu \subseteq \rho \subseteq \lambda} \sum_{(P,Q) \in T_{\rho}}(-1)^{|\rho|-|\mu|}\mathbf{z}^{wt(P)}\mathbf{z}^{wt(Q)}=0
\end{eqnarray*}
where $T_{\rho}$ is the set of all pairs $(P,Q)$ with $P \in \mathrm{OFT}(\rho/\mu)$ and $Q \in \mathrm{UFT}(\lambda/\rho)$.
\end{mainfact}

\begin{corollary}\label{duo2}
Let $\langle,\rangle$ denote the bilinear form on functions symmetric in $\mathbf{x}$ over the ring $\mathbb{Z}[\mathbf{z}]$ defined by $\langle s_{\mu}(\mathbf{x}),s_{\lambda}(\mathbf{x})\rangle=\delta_{\mu,\lambda}$ and extended $\mathbb{Z}[\mathbf{z}]$-bilinearly. Then we have:
\begin{eqnarray*}
\langle \mathfrak{G}_{\mu}(\mathbf{x},\mathbf{0},\mathbf{z}), \mathfrak{G}^*_{\lambda}(\mathbf{x},\mathbf{0},\mathbf{z}) \rangle = \delta_{\mu,\lambda}\\
\langle \mathfrak{G}_{\mu}(\mathbf{0},\mathbf{x},\mathbf{z}), \mathfrak{G}^*_{\lambda}(\mathbf{0},\mathbf{x},\mathbf{z}) \rangle = \delta_{\mu,\lambda}\\
\end{eqnarray*}

\end{corollary}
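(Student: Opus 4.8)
The plan is to reduce both identities to Main Fact~\ref{mf2} by expanding the two polynomials via Lemma~\ref{rskjdt} and then specializing $\mathbf{x}$ or $\mathbf{y}$ to $\mathbf{0}$. As in the proof of Corollary~\ref{duo1}, Lemma~\ref{rskjdt} gives, with $\rho$ ranging over partitions,
\begin{align*}
\mathfrak{G}_{\mu}(\mathbf{x},\mathbf{y},\mathbf{z})&=\sum_{\rho\supseteq\mu}(-1)^{|\rho|-|\mu|}\Bigl(\sum_{Q\in OFT(\rho/\mu)}\mathbf{z}^{wt(Q)}\Bigr)\Bigl(\sum_{P\in PT(\rho)}\mathbf{x}^{\ell w(P)}\mathbf{y}^{rw(P)}\Bigr),\\
\mathfrak{G}^{*}_{\lambda}(\mathbf{x},\mathbf{y},\mathbf{z})&=\sum_{\rho\subseteq\lambda}\Bigl(\sum_{Q\in UFT(\lambda/\rho)}\mathbf{z}^{wt(Q)}\Bigr)\Bigl(\sum_{P\in PT(\rho)}\mathbf{x}^{\ell w(P)}\mathbf{y}^{rw(P)}\Bigr).
\end{align*}

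The first point to settle is what the inner sum $\sum_{P\in PT(\rho)}\mathbf{x}^{\ell w(P)}\mathbf{y}^{rw(P)}$ becomes under specialization. Setting $\mathbf{y}=\mathbf{0}$ keeps only the primed tableaux with no primed entry, and such a tableau is exactly a semistandard Young tableau of shape $\rho$, so the inner sum specializes to $s_{\rho}(\mathbf{x})$. Setting $\mathbf{x}=\mathbf{0}$ keeps only the primed tableaux with no unprimed entry; transposing the Young diagram turns such a tableau into a semistandard Young tableau of shape $\rho'$, so the inner sum specializes to $s_{\rho'}(\mathbf{x})$.

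Next I would assemble the pairing in the case $\mathbf{y}=\mathbf{0}$. Writing $c^{O}_{\rho/\mu}(\mathbf{z})=\sum_{Q\in OFT(\rho/\mu)}\mathbf{z}^{wt(Q)}$ and $c^{U}_{\lambda/\rho}(\mathbf{z})=\sum_{Q\in UFT(\lambda/\rho)}\mathbf{z}^{wt(Q)}$, the previous step yields $\mathfrak{G}_{\mu}(\mathbf{x},\mathbf{0},\mathbf{z})=\sum_{\rho}(-1)^{|\rho|-|\mu|}c^{O}_{\rho/\mu}(\mathbf{z})\,s_{\rho}(\mathbf{x})$ and $\mathfrak{G}^{*}_{\lambda}(\mathbf{x},\mathbf{0},\mathbf{z})=\sum_{\sigma}c^{U}_{\lambda/\sigma}(\mathbf{z})\,s_{\sigma}(\mathbf{x})$, the latter a finite sum. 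Orthonormality of the Schur basis collapses the inner product to $\sum_{\rho}(-1)^{|\rho|-|\mu|}c^{O}_{\rho/\mu}(\mathbf{z})\,c^{U}_{\lambda/\rho}(\mathbf{z})$; since $c^{O}_{\rho/\mu}(\mathbf{z})\,c^{U}_{\lambda/\rho}(\mathbf{z})=\sum_{(P,Q)\in T_{\rho}}\mathbf{z}^{wt(P)}\mathbf{z}^{wt(Q)}$ and this product vanishes unless $\mu\subseteq\rho\subseteq\lambda$, the pairing is exactly the left-hand side of Main Fact~\ref{mf2}. The case $\mathbf{x}=\mathbf{0}$ runs identically with each $s_{\rho}(\mathbf{x})$ replaced by $s_{\rho'}(\mathbf{x})$ and with $\langle s_{\rho'},s_{\sigma'}\rangle=\delta_{\rho,\sigma}$; alternatively it follows from the first case by applying $\omega_{\mathbf{x}}$, which by Corollary~\ref{duo1} (set $\mathbf{y}=\mathbf{0}$) carries $\mathfrak{G}_{\mu}(\mathbf{x},\mathbf{0},\mathbf{z})$ and $\mathfrak{G}^{*}_{\lambda}(\mathbf{x},\mathbf{0},\mathbf{z})$ to $\mathfrak{G}_{\mu}(\mathbf{0},\mathbf{x},\mathbf{z})$ and $\mathfrak{G}^{*}_{\lambda}(\mathbf{0},\mathbf{x},\mathbf{z})$ and is an isometry for $\langle,\rangle$ because it permutes the orthonormal Schur basis.

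Finally I would dispatch the cases. If $\mu=\lambda$ only $\rho=\mu$ contributes, with $T_{\mu}$ the single pair of empty tableaux, giving the value $1$. If $\mu\not\subseteq\lambda$ there is no admissible $\rho$; and if $\mu\subseteq\lambda$ with $\mu,\lambda$ having different numbers of rows, then for every intermediate $\rho$ at least one of $OFT(\rho/\mu)$, $UFT(\lambda/\rho)$ is empty by convention; in either of these two situations the sum is $0$. In all three cases the value is $\delta_{\mu,\lambda}$. The only remaining case, $\mu\subsetneq\lambda$ with the same number of rows, is precisely the hypothesis of Main Fact~\ref{mf2}, which gives the value $0=\delta_{\mu,\lambda}$. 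I expect the genuine content to be entirely contained in Main Fact~\ref{mf2}: the only things to be careful about here are identifying the specialized $PT(\rho)$ generating function as $s_{\rho}$ or $s_{\rho'}$ (watching the transpose in the primed case), noting that the pairing is well defined because $\mathfrak{G}^{*}_{\lambda}$ specializes to a finite linear combination of Schur functions, and the row-count bookkeeping that makes Main Fact~\ref{mf2} apply verbatim.
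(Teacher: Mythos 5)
Your proposal is correct and follows essentially the same approach as the paper: expand both polynomials through Lemma~\ref{rskjdt}, identify the specialized $PT(\rho)$ generating function with $s_{\rho}(\mathbf{x})$ (resp.\ $s_{\rho'}(\mathbf{x})$), collapse the pairing by Schur orthonormality, and dispatch the cases with the boundary cases handled by hand and the essential case $\mu\subsetneq\lambda$ with equal row counts reduced to Main Fact~\ref{mf2}. The alternative you mention for the second identity (transporting the first along $\omega_{\mathbf{x}}$ using Corollary~\ref{duo1} and the fact that $\omega_{\mathbf{x}}$ is an isometry) is a small, valid shortcut not used by the paper, but the substance is the same.
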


\begin{proof}

Since a primed tableau with right weight (resp. left weight) of $\mathbf{0}$ is just a semistandard Young tableau (resp. conjugated semistandard Young tableau) it follows from the formulas \ref{pf1} and \ref{pf2} appearing in the proof of Corollary \ref{duo1} that $\langle \mathfrak{G}_{\mu}(\mathbf{x},\mathbf{0},\mathbf{z}), \mathfrak{G}^*_{\lambda}(\mathbf{x},\mathbf{0},\mathbf{z}) \rangle$ is equal to:
\begin{align*}
\langle \left( \sum_{\rho \supseteq \mu} (-1)^{|\rho|-|\mu|}\sum_{P \in \mathrm{OFT}(\rho/\mu)}\mathbf{z}^{O(P)}s_{\rho}(\mathbf{x})\right) , \left(\sum_{\sigma \subseteq \lambda} \sum_{Q \in \mathrm{UFT}(\lambda/\sigma)}\mathbf{z}^{U(Q)}s_{\sigma}(\mathbf{x})\right)\rangle\\
\end{align*}
and that $\langle \mathfrak{G}_{\mu}(\mathbf{0},\mathbf{x},\mathbf{z}), \mathfrak{G}^*_{\lambda}(\mathbf{0},\mathbf{x},\mathbf{z}) \rangle$ is equal to:
\begin{align*}
\langle \left( \sum_{\rho \supseteq \mu} (-1)^{|\rho|-|\mu|}\sum_{P \in \mathrm{OFT}(\rho/\mu)}\mathbf{z}^{O(P)}s_{\rho'}(\mathbf{x})\right) , \left(\sum_{\sigma \subseteq \lambda} \sum_{Q \in \mathrm{UFT}(\lambda/\sigma)}\mathbf{z}^{U(Q)}s_{\sigma'}(\mathbf{x})\right)\rangle\\
\end{align*}
If $\mu=\lambda$ then it is clear there is exactly one nonzero term in the expansion of these inner products which occurs when $\mu=\rho=\sigma$ and that this term has coefficient $(-1)^0 \mathbf{z}^{\mathbf{0}}=1$. If $\mu \not\subseteq \lambda$ then it is clear that all terms in the expansion of the inner products must be $0$.  If $\mu \subset \lambda$ have different numbers of rows then it follows from convention \ref{difrows} that all terms in the expansion of the inner products must be $0$. Finally, if $\mu \subset \lambda$ have the same number of rows then it follows from main fact \ref{mf2} that the inner products evaluate to $0$.
\end{proof}

\begin{remark}
Corollary \ref{duo2} says that under $\langle,\rangle$ the refined weak symmetric Grothendieck polynomial is dual to the refined dual weak symmetric Grothen-dieck polynomial and that the refined symmetric Grothendieck polynomial is dual to the refined dual symmetric Grothendieck polynomial. I.e., $\textbf{1A} \xrightarrow{\langle,\rangle} \textbf{2A}$ and $\textbf{1B} \xrightarrow{\langle,\rangle} \textbf{2B}$.
\end{remark}

\section{Proof of the Main Facts}

\begin{definition}
Let $\prec$ denote any total order on the set $\{1',1,2',2',\ldots\}$. We define a primed tableau with respect to $\prec$ of shape $\lambda/\mu$ to be a filling of the shape $\lambda/\mu$ such that:
\begin{itemize}
\item Each box of $\lambda/\mu$ contains exactly one of $\{1',1,2',2',\ldots\}$.
\item The rows of $\lambda/\mu$ weakly increase under $\prec$ left to right.
\item The columns of $\lambda/\mu$ weakly increase under $\prec$ top to bottom.
\item There is at most one $i$ in each column for each $i$.
\item There is at most one $i'$ in each row for each $i$.
\end{itemize}
The set of such tableaux is denoted by $\mathrm{PT}_{\prec}(\lambda/\mu)$. The left weight of a $\mathrm{PT}_{\prec}$ is the vector whose $i^{th}$ coordinate records the number of times $i$ appears in the tableau. The right weight of a $\mathrm{PT}_{\prec}$ is the vector whose $i^{th}$ coordinate records the number of times $i'$ appears in the tableau.
\end{definition}
Note that if $\prec$ is the order $1'\prec1\prec2'\prec2\prec \cdots$ then $\mathrm{PT}_{\prec}(\lambda/\emptyset)=\mathrm{PT}(\lambda)$.

\begin{lemma}\label{ordering}
The total order chosen in the definition above is irrelevant. In other words, given any two total orderings $\prec$ and $\vartriangleleft$ there is a left weight and right weight preserving bijection between $\mathrm{PT}_{\prec}(\lambda/\mu)$ and $\mathrm{PT}_{\vartriangleleft}(\lambda/\mu)$.
\end{lemma}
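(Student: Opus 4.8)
The plan is to reduce to the case where $\prec$ and $\vartriangleleft$ differ by a single adjacent transposition, since any two total orders on the (countable, but in any fixed case only finitely many entries matter) alphabet are connected by a sequence of such swaps. So suppose $\vartriangleleft$ agrees with $\prec$ except that two letters $a,b$ that are $\prec$-adjacent (with $a\prec b$) get switched (so $b\vartriangleleft a$). The $PT_\prec$ and $PT_\vartriangleleft$ conditions then agree on every box that does not contain $a$ or $b$; I need a weight-preserving bijection that rearranges only the $a$'s and $b$'s. Note $a,b$ can be of four types: $\{i',j'\}$, $\{i',j\}$, $\{i,j'\}$, or $\{i,j\}$; in all cases, the ``at most one $i$ per column'' and ``at most one $i'$ per row'' clauses impose extra restrictions exactly when $a,b$ are of the same type (both primed or both unprimed) and refer to the same underlying number — but $\prec$-adjacency and the fact that $i'<i$ in the base order is not assumed here, so I should just treat $a,b$ as abstract adjacent letters and carry along whichever column/row multiplicity constraints attach to each.

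The core is a local rectification argument on the ``$ab$-subtableau.'' First I would observe that because $a,b$ are $\prec$-adjacent, the set of boxes containing $a$ or $b$ forms a skew shape, and within each row the $a$'s (if $a$ is the smaller) occupy a contiguous block to the left of the $b$'s, while the columns are constrained similarly; the configuration is governed entirely by how many $a$'s and $b$'s sit in each row and column, subject to the multiplicity clauses. This is precisely the setup where a Bender--Knuth-style involution works: on each row (or each column, depending on which letters are primed), one slides the boundary between the $a$-block and the $b$-block to swap their counts, but one must ``freeze'' the cells that are pinned by a constraint from the row/column in the perpendicular direction or by a multiplicity clause. Concretely, in a row with $p$ free $a$'s and $q$ free $b$'s, one replaces them by $q$ free $a$'s and $p$ free $b$'s in order; a cell is free unless it is forced (e.g. it would create a second $i$ in its column when $a,b$ share underlying number $i$, or it is clamped by the entry above/below). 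I would check this swap keeps every $PT$-inequality intact and is an involution, hence a bijection, and that it sends left weight and right weight to themselves: if $a,b$ are both unprimed the left weight coordinates for the two relevant indices are exchanged and all else fixed — wait, that is not weight-preserving — so in fact when $a,b$ are both of the same type and different underlying numbers we must be more careful, and this is the delicate point.

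The hard part will be the ``same type, different number'' case: swapping the roles of, say, $2$ and $3$ in the order does change which entries are called $2$ versus $3$, but it does \emph{not} change the tableau's set of entries, so the left weight is literally unchanged — the point is that a $PT_\prec$-tableau and the ``same'' filling read as a $PT_\vartriangleleft$-tableau need not both be valid, because the column-strictness and one-per-column clauses are sensitive to the order. So the bijection must genuinely move entries while keeping counts fixed, which is again a Bender--Knuth toggle but now the ``blocks'' being balanced are, within each \emph{column}, the runs of $2$'s and $3$'s (each appearing at most once per column, so each column has at most one $2$ and at most one $3$, and if it has both they are forced adjacent with $2$ above $3$); the toggle swaps a column that has only a $2$ with one pattern for one that has only a $3$, leaving columns with both (or neither) fixed, and one must verify the row inequalities survive. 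I expect the bookkeeping of which of these four regimes to apply, and checking in each that the toggle respects \emph{all} the defining conditions simultaneously, to be the main obstacle; the rest is the routine ``compose adjacent swaps'' wrapper and an induction on the number of transpositions needed to pass from $\prec$ to $\vartriangleleft$.
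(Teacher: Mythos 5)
Your overall skeleton (reduce to adjacent transpositions of the alphabet, then give a local weight-preserving bijection for each elementary swap) is sound and is also, at the level of organization, what the paper does. But the case you flag as ``the delicate point'' is in fact the easy one, and the case you breeze past is where your method breaks.

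For two same-type adjacent letters, say $2\prec 3$ unprimed, the paper dispenses with this in one line by decomposing a primed tableau into a skew SSYT (unprimed part) times a conjugate skew SSYT (primed part) and invoking symmetry of skew Schur functions. Combinatorially this is exactly Bender--Knuth on the $\{2,3\}$-subtableau \emph{followed by relabeling} $2\leftrightarrow 3$: BK swaps the counts and the relabeling swaps them back while reversing the order, so the composite preserves both weights and lands in $PT_\vartriangleleft$. Your column-toggle description of this step is vague and never mentions the relabeling, but the idea is there and the case is genuinely routine.

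The real gap is the mixed-type swap, say $2\prec 3'$ adjacent with $3'\vartriangleleft 2$. Here $2$ contributes to the left weight and $3'$ to the right weight, so the bijection must preserve the number of $2$'s and the number of $3'$'s \emph{separately}. A Bender--Knuth toggle, by design, trades $2$'s for $3'$'s, and there is no relabeling to undo this because the two letters feed different weight vectors; moreover the multiplicity constraints are asymmetric in this case (at most one $2$ per column but no bound per row, at most one $3'$ per row but no bound per column), so the freeze-and-slide step doesn't even stay inside the set of valid fillings. Already on the shape $(2,1)$ with filling having $2$ at $(1,1)$ and $3'$ at $(1,2),(2,1)$, BK on row $1$ produces two $2$'s and one $3'$, the wrong multiset. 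What the paper actually does for this case is define the maps $\nearrow$ and $\swarrow$: the boxes holding $2$ or $3'$ meet each northwest--southeast diagonal in at most one box, these are grouped into maximal runs of consecutive diagonals, and within each run the upper-right entry is cycled to the lower-left corner while the remaining $2$'s slide one box right and the remaining $3'$'s slide one box up. This is a cyclic shift along a diagonal component, not a count-balancing toggle, and it is the genuine content of the lemma. Your write-up does not contain this idea or any substitute for it.
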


\begin{proof}
First we prove the lemma in the case that there are some $i$ and $j$ (possibly equal) such that $i \prec j'$ and $j'\vartriangleleft i$ and all other pairs of letters have the same relationship in both orders. From this assumption it follows that under both $\prec$ and $\vartriangleleft$ there is no other letter between $i$ and $j'$.

If $T \in \mathrm{PT}_{\prec}(\lambda/\mu)$ or $T \in \mathrm{PT}_{\vartriangleleft}(\lambda/\mu)$, let $\mathcal{B}(T)$ denote all of the boxes of $\lambda/\mu$ that contain an $i$ or a $j'$. If $\lambda=(\lambda_1,\ldots,\lambda_{\ell})$ then label the the upper left to lower right diagonals of $\lambda$ by $\{d_{-\ell+1}, \ldots,d_{-1},d_0,d_1,\ldots, d_{\lambda_1-1}\}$. It is clear that either $\mathcal{B}(T) \cap d_s$ is a single box or is empty for each $s$. If $p$ is minimal and $q$ is maximal such that the adjacent diagonals $d_p,d_{p+1},\ldots,d_{q-1},d_q$ each have nonempty intersection with $\mathcal{B}(T)$ then we call $B=\{d_p \cap \mathcal{B}(T) , \ldots, d_q \cap \mathcal{B}(T)\}$ a connected component of $\mathcal{B}(T)$.  (Note that such a $B$ is in fact a ribbon of $T$).

Define a map $\nearrow$ from $\mathrm{PT}_{\prec}(\lambda/\mu)$ to $\mathrm{PT}_{\vartriangleleft}(\lambda/\mu)$ as follows. Suppose that $T \in \mathrm{PT}_{\prec}(\lambda/\mu)$. Perform the following to each connected component $B$ of $\mathcal{B}(T)$.
\begin{itemize}
\item Remove the entry that appears in the upper rightmost box of $B$. Record what you have removed.
\item Move every remaining $i$ one box to the right and every remaining $j'$ one box up.
\item Fill the lower leftmost box of $B$ with the entry recorded in step one.
\end{itemize}

Next define a map $\swarrow$ from $\mathrm{PT}_{\vartriangleleft}(\lambda/\mu)$ to $\mathrm{PT}_{\prec}(\lambda/\mu)$ as follows. Suppose that $T \in \mathrm{PT}_{\vartriangleleft}(\lambda)$. Perform the following to each connected component $B$ of $\mathcal{B}(T)$.
\begin{itemize}
\item Remove the entry that appears in the lower leftmost box of $B$. Record what you have removed.
\item Move every remaining $i$ one box to the left and every remaining $j'$ one box down.
\item Fill the upper rightmost box of $B$ with the entry recorded in step one.
\end{itemize}

It is not difficult to check that the maps $\nearrow$ and $\swarrow$ are well-defined and preserve weights and are mutual inverses.  This proves the lemma in the case that there are some $i$ and $j$ (possibly equal) such that $i \prec j'$ and $j'\vartriangleleft i$ and all other pairs of letters have the same relationship in both orders.  

Now suppose we are given two arbitrary orderings $\prec$ and $\vartriangleleft$. By what was just proved we may successively alter $\prec$ until all unprimed entries precede all primed entries and we may do so similarly for $\vartriangleleft$.  Thus we may assume that $i \prec j'$  and $i \vartriangleleft j'$  for all $i$ and $j$.  But in this case a primed tableau may be thought of as a pair of tableaux composed of the tableau formed by the unprimed entries and the tableau formed by the primed entries. Thus we may write:
\begin{eqnarray*}
\mathrm{PT}_{\prec}(\lambda/\mu)= \bigcup_{\mu \subseteq \rho \subseteq \lambda} \mathrm{SSYT}_{\prec}(\rho/\mu) \times \mathrm{SSYT}'_{\prec}(\lambda/\rho)
\end{eqnarray*}
where $\mathrm{SSYT}_{\prec}(\rho/\mu)$ is the set of skew semistandard Young tableaux of shape $(\rho/\mu)$in the alphabet $\{1,2,\cdots\}$ under the order $\prec$ as restricted to $\{1,2,\cdots\}$ and $\mathrm{SSYT}'_{\prec}(\lambda/\rho)$ is the set of conjugate skew semistandard Young tableaux of shape $(\lambda/\rho)$ in the alphabet $\{1',2',\cdots\}$ under the order $\prec$ as restricted to $\{1',2',\cdots\}$. But all skew Schur functions are symmetric so this implies that $\prec$ may be replaced with $\vartriangleleft$ in the right hand side of the equation above, which makes it equal to $\mathrm{PT}_{\vartriangleleft}(\lambda/\mu)$ by the same logic with which we arrived at this equation. 
\end{proof}

\begin{example}
Suppose that we are given the following orderings:
\begin{eqnarray*}
1' \prec 1 \prec 2 \prec 3 \prec 2' \prec 4 \prec 3' \prec 4'\\
1' \vartriangleleft 1 \vartriangleleft 2 \vartriangleleft 2' \vartriangleleft 3 \vartriangleleft 4 \vartriangleleft 3' \vartriangleleft 4'
\end{eqnarray*}
Then under the map $\nearrow$ with $i=3$ and $j=2$ the tableau $S \in \mathrm{PT}_{\prec}(\lambda/\mu)$ on the left below is sent to the tableau $T \in \mathrm{PT}_{\vartriangleleft}(\lambda/\mu)$ on the right below. Note that $\mathcal{B}(S)$ is composed of $3$ connected components of sizes $1$, $7$, and $3$.
\begin{eqnarray*}
S=\young(::{{1'}}122{{\textcolor{red}{3}}}{{\textcolor{red}{3}}}{{\textcolor{red}{2'}}},{{1'}}11{{\textcolor{red}{3}}}{{\textcolor{red}{3}}}{{\textcolor{red}{3}}}{{3'}}{{4'}},{{1'}}22{{\textcolor{red}{2'}}}44{{3'}},1{{\textcolor{red}{3}}}{{\textcolor{red}{3}}}{{\textcolor{red}{2'}}}{{3'}},24{{3'}},{{\textcolor{red}{3}}}{{3'}})
\nearrow \,\,\,\,\,\,\,\,\,\,\,\,
\young(::{{1'}}122{{\textcolor{red}{2'}}}{{\textcolor{red}{3}}}{{\textcolor{red}{3}}},{{1'}}11{{\textcolor{red}{2'}}}{{\textcolor{red}{3}}}{{\textcolor{red}{3}}}{{3'}}{{4'}},{{1'}}22{{\textcolor{red}{2'}}}44{{3'}},1{{\textcolor{red}{3}}}{{\textcolor{red}{3}}}{{\textcolor{red}{3}}}{{3'}},24{{3'}},{{\textcolor{red}{3}}}{{3'}})=T
\end{eqnarray*}
\end{example}

\begin{proof}[Proof of Main Fact \ref{mf1}]
Consider the orders:
\begin{eqnarray*}
1'\prec 1 \prec 2' \prec 2 \prec \cdots\\
1\vartriangleleft 2 \vartriangleleft \cdots 1' \vartriangleleft 2' \vartriangleleft \cdots\\
1' \blacktriangleleft 2' \blacktriangleleft \cdots 1 \blacktriangleleft 2 \blacktriangleleft \cdots
\end{eqnarray*}

Since an element of $ \mathrm{PT}_{\vartriangleleft}(\lambda/\emptyset)$ can be thought as a semistandard Young tableau of some shape $\mu \subseteq \lambda$ along with a conjugate semistandard Young tableau (with its entries primed) of shape $\lambda/\mu$, we see that:
\begin{eqnarray}
\sum_{P \in \mathrm{PT}_{\vartriangleleft}(\lambda/\emptyset)} \mathbf{x}^{\ell w(P)} \mathbf{y}^{rw(P)}=\sum_{\mu \subseteq \lambda} s_{\mu}(\mathbf{x}) s_{\lambda'/\mu'}(\mathbf{y}) \label{before}
\end{eqnarray}

On the other hand, an element $\mathrm{PT}_{\blacktriangleleft}(\lambda/\emptyset)$ can be thought as a conjugate semistandard Young tableau (with its entries primed) of some shape $\mu \subseteq \lambda$ along with a semistandard Young tableau of shape $\lambda/\mu$, so that:
\begin{eqnarray}
\sum_{P \in \mathrm{PT}_{\blacktriangleleft}(\lambda/\emptyset)} \mathbf{y}^{\ell w(P)} \mathbf{x}^{rw(P)}=\sum_{\mu \subseteq \lambda} s_{\mu'}(\mathbf{x}) s_{\lambda/\mu}(\mathbf{y}) \label{above}
\end{eqnarray}

Since the right hand side of the equation \ref{above} is obtained from the right hand side of  equation \ref{before} by applying $\omega_{\mathbf{x}}\omega_{\mathbf{y}}$ the same is true of the left hand sides. Moreover by Lemma \ref{ordering} both $\mathrm{PT}_{\vartriangleleft}(\lambda/\emptyset)$ and $\mathrm{PT}_{\blacktriangleleft}(\lambda/\emptyset)$ may be replaced by $\mathrm{PT}_{\prec}(\lambda/\emptyset)$ which is turn equivalent to $\mathrm{PT}(\lambda)$. This proves that:
\begin{eqnarray*}
\omega_{\mathbf{x}}\omega_{\mathbf{y}}\left(\sum_{P \in \mathrm{PT}(\lambda)} \mathbf{x}^{\ell w(P)} \mathbf{y}^{rw(P)}\right)=\sum_{P \in \mathrm{PT}(\lambda)} \mathbf{y}^{\ell w(P)} \mathbf{x}^{rw(P)}
\end{eqnarray*}
\end{proof}

\begin{definition}
Fix the order $\cdots \prec 2 \prec 1 \prec 1' \prec 2' \prec \cdots$. Define a primed flagged tableau of shape $\lambda/\mu$ to be a tableau, $P$, that is an element of $\mathrm{PT}_{\prec}(\lambda/\mu)$ such that:
\begin{itemize}
\item If row $i$ of $P$ contains a $j$ then $j \leq \mu_i$
\item If row $i$ of $P$ contains a $j'$ then $j < \lambda_i$.
\end{itemize}
Let $\mathrm{PFT}(\lambda/\mu)$ denote the set of all primed flagged tableaux of shape $\lambda/\mu$. We say a ``row is $\prec$" if the entries are weakly increasing left ro right under $\prec$ and there are no repeated primed entries. We say a ``column is $\prec$" if the entries are weakly increasing top to bottom under $\prec$ and there are no repeated unprimed entries. The condition concerning the nonprimed entries of row $i$ is called the nonprimed row $i$ flag condition. The condition concerning the primed entries of row $i$ is called the primed row $i$ flag condition. In other words, $P \in \mathrm{PFT}(\lambda/\mu)$ if and only all its rows and columns are $\prec$ and it satisfies the nonprimed and primed row $i$ flag conditions for each $i$.

\end{definition}

\begin{lemma}\label{z}
Let $\mu \subset \lambda$ be partitions with the same number of rows.
\begin{eqnarray*}
\sum_{P \in \mathrm{PFT}_{\prec}(\lambda/\mu)} (-1)^{\#(P)}\mathbf{z}^{\ell w(P)}\mathbf{z}^{rw(P)}=0
\end{eqnarray*}
Where $\#(P)$ is the number of nonprimed entries in $P$, $\ell w(P)$ is the left weight of $P$, and $rw(P)$ is the right weight of $P$.
\end{lemma}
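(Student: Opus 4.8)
The plan is to exhibit a sign-reversing involution on $PFT_{\prec}(\lambda/\mu)$ (with respect to the fixed order $\cdots \prec 2 \prec 1 \prec 1' \prec 2' \prec \cdots$) that preserves $\ell w$ and $rw$ — hence preserves the monomial $\mathbf{z}^{\ell w(P)}\mathbf{z}^{rw(P)}$ — and changes the parity of $\#(P)$ by exactly one. Since everything in a $PFT$ lives in a skew shape with the same number of rows in $\mu$ and $\lambda$, every row of $\lambda/\mu$ is a nonempty horizontal strip, and the flag conditions bound the entry in row $i$ by $\mu_i$ (for unprimed) or by $\lambda_i - 1$ (for primed). The key observation I would exploit is that the letters $1$ and $1'$ are \emph{adjacent} in $\prec$, with $1$ immediately below $1'$, and that $1$ may appear at most once per column while $1'$ may appear at most once per row; moreover the flag condition on an unprimed $1$ is vacuous (it only requires $1 \le \mu_i$, which holds since $\mu_i \geq 1$ as $\mu \subset \lambda$ forces $\mu$ to have positive parts in every row — here I would need to check the degenerate case $\mu_i = 0$, i.e. $\mu$ has a zero part, separately, since then row $i$ of $\lambda/\mu$ starts in column $1$).

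The involution I propose: look at the boxes of $\lambda/\mu$ containing a $1$ or a $1'$. Because $1 \prec 1'$ and no letter lies strictly between them, these boxes, together with the row-strictness on $1'$ and column-strictness on $1$, arrange themselves into ``connected components'' along upper-left to lower-right diagonals, exactly as in the proof of Lemma \ref{ordering} (the $\nearrow/\swarrow$ construction with $i=1$, $j=1$). Within each such component, the entries read along the component are a string of $1$'s followed by a string of $1'$'s. I would then toggle the \emph{boundary} box of one distinguished component — say the component containing the topmost-leftmost $1$-or-$1'$ box under some fixed total order on components — changing a $1$ to a $1'$ or vice versa at the place where the two strings meet (or, if the component is pure, converting its single ``corner'' box). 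Concretely, in a component whose reading word is $1^a (1')^b$, the candidate swap replaces the last $1$ by a $1'$ (when $a \geq 1$) or the first $1'$ by a $1$ (when $a = 0$, so $b \geq 1$); one checks this stays a valid $PT_{\prec}$ because adjacency of $1,1'$ means no row/column inequality is disturbed, and the ``at most one $i$ per column / one $i'$ per row'' constraints are maintained since we only ever have a single such letter in the relevant line after the move. This changes $\#(P)$ by $\pm 1$ and leaves $\ell w + rw$ content in the $z$-variables untouched — wait, it does change $\ell w$ by $-e_1$ and $rw$ by $+e_1$ or vice versa, so to keep $\mathbf{z}^{\ell w}\mathbf{z}^{rw}$ fixed I must be swapping $1 \leftrightarrow 1'$, which indeed preserves $\ell w(P) + rw(P)$ as a \emph{multiset count weighted by} $\mathbf{z}$: note $\mathbf{z}^{\ell w(P)}\mathbf{z}^{rw(P)} = \prod_i z_i^{(\#i) + (\#i')}$, so moving a box from ``$1$'' to ``$1'$'' leaves this product fixed. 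Good — that is exactly why the statement combines $\ell w$ and $rw$ into the same $\mathbf{z}$.

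The remaining issue is the flag conditions: swapping a $1$ to a $1'$ in row $i$ requires $1 < \lambda_i$, i.e. $\lambda_i \geq 2$, and swapping a $1'$ to a $1$ in row $i$ requires $1 \leq \mu_i$. So the involution as stated fails on components that would force an illegal move. The fix, which I expect to be the main obstacle, is to choose the distinguished swap more carefully: I would prove that among all the $1$/$1'$ boxes there is always at least one where the swap is legal in \emph{both} directions — e.g. a $1'$ in a row $i$ with $\mu_i \geq 1$, or a $1$ in a row $i$ with $\lambda_i \geq 2$ — using that $\mu \subset \lambda$ strictly, so some $\lambda_i > \mu_i \geq 1$ giving a row with $\lambda_i \geq 2$, while the leftmost box of each skew row (when $\mu_i \geq 1$) can legally hold a $1$. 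If no $1$/$1'$ box exists at all, I would instead induct or directly pair $P$ with the tableau obtained by placing a $1$ (or $1'$) in a forced outer/inner corner; making this base case airtight — showing the set of ``no $1$ or $1'$ anywhere'' tableaux still cancels, presumably by recursing on the next pair of letters $2, 2'$ after noting those are also $\prec$-adjacent — is the part that needs the most care. Alternatively, and perhaps more cleanly, I would run the cancellation on the \emph{smallest} letter (in $\prec$) that actually appears, pairing it up or down with its $\prec$-neighbor, and argue by strong induction on $|\lambda/\mu|$ that the only tableau with no legal move is excluded because $\mu \subsetneq \lambda$ guarantees a row long enough to always permit promoting some entry. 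I expect the bookkeeping of exactly which boxes admit a legal toggle — reconciling the two flag inequalities with the adjacency structure of components — to be where the real work lies; once a single canonical sign-reversing, weight-preserving move is pinned down, well-definedness and involutivity follow by the same bookkeeping as in Lemma \ref{ordering}.
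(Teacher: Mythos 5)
Your high-level plan — a sign-reversing involution toggling a single box from unprimed to primed or back — is exactly what the paper does, and your observation that $\mathbf{z}^{\ell w(P)}\mathbf{z}^{rw(P)}$ depends only on $\ell w(P)+rw(P)$ is the reason the toggle is weight-preserving. But the proposal stalls at precisely the point you identify as ``the part that needs the most care'': you never pin down a \emph{canonical} box to toggle, and you never verify the flag conditions. Those two steps are the actual content of the proof. Several of your specific suggestions also drift in unhelpful directions: the ``connected component / boundary'' machinery you import from Lemma~\ref{ordering} is not needed here and does not obviously give a well-defined move (which component is distinguished? which box inside it? why is the result a valid $PFT$?); and your hypothesized ``base case'' of a tableau with no $1$ or $1'$ is a non-issue, because one does not fix the letter $1$ in advance. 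Also note that ``the smallest letter under $\prec$'' is ambiguous in the order $\cdots\prec 2\prec 1\prec 1'\prec 2'\prec\cdots$ (the $\prec$-smallest entry of $P$ is the \emph{largest} unprimed number present); what is wanted is the smallest \emph{integer} $m$ such that $m$ or $m'$ appears.

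The paper's resolution: let $m$ be the smallest integer with $m$ or $m'$ occurring in $P$ (such an $m$ always exists because $\mu\subsetneq\lambda$ forces $\lambda/\mu$ to be nonempty); let $i$ be minimal with an $m$ or $m'$ in row $i$; let $j$ be maximal with an $m$ or $m'$ in box $b=(i,j)$; toggle $b$. The $\prec$-validity of the result is checked by looking right, below, above, and left of $b$ and using the minimality of $m$ and $i$ and the maximality of $j$. The flag-condition check is the genuinely non-obvious part, and you should work it out: if $b$ contains $m$, then $m\le\mu_i<\lambda_i$ (the middle inequality holds since row $i$ is nonempty) immediately gives the primed flag after the swap; but if $b$ contains $m'$, one must show $m\le\mu_i$, and the paper's argument is a short proof by contradiction: supposing $m>\mu_i$ forces $b$ to be the leftmost box in row $i$ (a larger unprimed entry to its left would already break the nonprimed flag), and then row-strictness of primed entries pushes the entry in column $\lambda_i$ of row $i$ up to some $n'$ with $n\ge\lambda_i-\mu_i-1+m\ge\lambda_i$, contradicting the primed flag. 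This chain of inequalities is where $\mu\subsetneq\lambda$ and the flag bounds $\mu_i$, $\lambda_i-1$ interact; it is the piece of the proof your proposal gestures at but does not supply. Because the set of boxes containing $m$ or $m'$ is unchanged by the toggle, the same $(m,i,j)$ is selected in $\iota(P)$, so $\iota$ is an involution — no recursion on $2,2'$ or induction on $|\lambda/\mu|$ is needed.
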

\begin{proof}
We complete the proof by constructing a (fixed point free) sign reversing involution on $ \mathrm{PFT}(\lambda/\mu)$ which preserves the value of $\ell w(P)+rw(P)$ and where the sign is the value of $(-1)^{\#(P)}$. The involution, $\iota$, is described as follows:

Let $P \in \mathrm{PFT}(\lambda/\mu)$. Let $m$ be the smallest integer such that $P$ contains an $m$ or an $m'$. Now, let $i$ be minimal such that row $i$ of $P$ contains an $m$ or $m'$. Finally, let $j$ be maximal such that the box, $b$, of $P$, in row $i$ and column $j$ contains an $m$ or an $m'$. Define $\iota(P)$ to be the tableau obtained by:
\begin{itemize}
\item If $b$ contains an $m$, replace it with an $m'$.
\item If $b$ contains an $m'$, replace it with an $m$.
\end{itemize}

First suppose that box $b$ of $P$ contains an $m$. If $\lambda/\mu$ has a box to the right of $b$ then $P$ must have in this box an $s$ or an $s' $ for some $s>m$ by the minimality of $m$ and the maximality of $j$. But it cannot be an $s$ because row $i$ of $P$ is assumed to be $\prec$. Since $m' \prec s'$ it follows that row $i$ of $\iota(P)$ is $\prec$.
If $\lambda/\mu$ has a box below $b$ then $P$ must have in this box a $t$ or a $t' $ for some $t \geq m$ by the minimality of $m$. But it cannot be a $t$ because column $j$ of $P$ is assumed to be $\prec$. Since $m' \preceq t'$ it follows that column $j$ of $\iota(P)$ is $\prec$. Consequently, all rows and columns of $\iota(P)$ are $\prec$.
Next, we have $m \leq \mu_i$ because $P$ is assumed to satisfy the nonprimed row $i$ flag condition. But $\mu_i<\lambda_i$ since we know row $i$ of $\lambda/\mu$ has at least one box. Thus $m < \lambda_i$ and it follows that $\iota(P)$ satisfies the primed row $i$ flag condition. Consequently, $\iota(P)$ satisfies all flag conditions. This shows that $\iota(P) \in \mathrm{PFT}(\lambda/\mu)$.

Now suppose that box $b$ of $P$ contains an $m'$. If $\lambda/\mu$ has a box above $b$ then $P$ must have in this box an $s$ or an $s' $ for some $s>m$ by the minimality of $m$ and the minimality of $i$. But it cannot be an $s'$ because column $j$ of $P$ is assumed to be $\prec$. Since $s \prec m$ it follows that column $j$ of $\iota(P)$ is $\prec$.
If $\lambda/\mu$ has a box to the left of $b$ then $P$ must have in this box a $t$ or a $t' $ for some $t \geq m$ by the minimality of $m$. But it cannot be a $t'$ because row $i$ of $P$ is assumed to be $\prec$. Since $t \preceq m$ it follows that row $i$ of $\iota(P)$ is $\prec$. Consequently, all rows and columns of $\iota(P)$ are $\prec$.
Next, suppose that $m>\mu_i$. As stated, if $\lambda/\mu$ has a box to the left of $b$ then $P$ has in this box some $t$ with $t \geq m$. But this would contradict the nonprimed row $i$ flag condition so $b$ must be the leftmost box in row $i$ of $\lambda/\mu$. It follows from this and the fact that row $i$ of $P$ is $\prec$ that the box in row $i$ and column $\lambda_i$ of $P$ contains $n'$ for some $n \geq \lambda_i-\mu_i-1+m$. But this means that $n \geq \lambda_i$ since we assumed $m>\mu_i$. This would contradict the primed row $i$ flag condition so we conclude that $m \leq \mu_i$. This means that $\iota(P)$ satisfies the nonprimed row $i$ flag condition. Consequently, $\iota(P)$ satisfies all flag conditions. This shows that $\iota(P) \in \mathrm{PFT}(\lambda/\mu)$.

It is clear that for all $P \in \mathrm{PFT}(\lambda/\mu)$ we have $\iota^2(P)=P$ and that $\ell w(P) +rw(P)=\ell w(\iota(P)) +rw(\iota(P))$ and that $(-1)^{\#(P)}=-(-1)^{\#(\iota(P))}$. This proves the lemma.
\end{proof}

\begin{example}
The involution $\iota$ associates the following two tableaux.
\begin{eqnarray*}
\Yboxdim{14pt}
\young(*****544{{3'}}{{5'}},****433{{3'}}{{6'}}{{8'}},****32{{\textcolor{red}{2}}}{{3'}}{{6'}},***32{{2'}}{{3'}}{{4'}})
\,\,\,\,\, \longleftrightarrow \,\,\,\,\,
\young(*****544{{3'}}{{5'}},****433{{3'}}{{6'}}{{8'}},****32{{\textcolor{red}{2'}}}{{3'}}{{6'}},***32{{2'}}{{3'}}{{4'}})
\end{eqnarray*}
In this case, $m=2$, $i=3$ and $j=7$.
\end{example}

\begin{proof}[Proof of Main Fact \ref{mf2}]
Let $\mu \subset \lambda$ be partitions with exactly $\ell$ rows. There is a canonical bijection:
\begin{eqnarray*}
\bigcup_{\mu \subseteq \rho \subseteq \lambda} \mathrm{OFT}(\rho/\mu) \times \mathrm{UFT}(\lambda/\rho) \rightarrow \mathrm{PFT}(\lambda/\mu)
\end{eqnarray*}
It is given by sending a pair $(P,Q)$ to the tableau obtained by superimposing $P$ and $Q$ on a single Young diagram of shape $\lambda/\mu$. Moreover, if $(P,Q) \rightarrow R$ then $\ell w(R)=wt(P)$ and $rw(R)=wt(Q)$ while $\#(R)=|\rho|-|\mu|$ where $\rho$ is the shape of $P$. Therefore writing $T_{\rho}=\mathrm{OFT}(\rho/\mu) \times \mathrm{UFT}(\lambda/\rho)$, we have:

\begin{eqnarray*}
\sum_{\mu \subseteq \rho \subseteq \lambda} \sum_{(P,Q) \in T_{\rho}}(-1)^{|\rho|-|\mu|}\mathbf{z}^{wt(P)}\mathbf{z}^{wt(Q)}=\sum_{R \in \mathrm{PFT} (\lambda/\mu)} (-1)^{\#(R)}\mathbf{z}^{\ell w(R)}\mathbf{z}^{rw(R)}
\end{eqnarray*}
But the latter is $0$ by lemma \ref{z}. This implies the statement of Main Fact \ref{mf2}.
\end{proof}

\end{document}